
\documentclass[a4paper, 11pt]{amsart}
\usepackage{amsmath, amssymb}
\usepackage{amsfonts}
\usepackage{mathrsfs}
\usepackage[arrow,matrix,curve,cmtip,ps]{xy}

\usepackage{amsthm}


\usepackage{graphicx}
\usepackage{bm}
\usepackage{color}
\usepackage{amsfonts}
\usepackage{amscd}
\usepackage{comment}
\usepackage{epsfig}

\usepackage[colorlinks=true, pdfstartview=FitV, linkcolor=red, citecolor=blue, urlcolor=green]{hyperref}
\usepackage{tikz,etex,bbm,xspace,hyperref}
\newcommand{\arxiv}[1]{\href{http://arxiv.org/abs/#1}{\tt arXiv:\nolinkurl{#1}}}

\addtolength{\hoffset}{-1.5cm}
\addtolength{\textwidth}{3cm}
\allowdisplaybreaks

\newtheorem{theorem}{Theorem}[section]
\newtheorem{lemma}[theorem]{Lemma}

\newtheorem*{theorem*}{Theorem}
\theoremstyle{remark}
\newtheorem{remark}[theorem]{Remark}
\newtheorem{definition}[theorem]{Definition}

\newtheorem{claim}[theorem]{Claim}
\newtheorem*{solution*}{Solution}

\numberwithin{equation}{section}


\newcommand{\ci}[1]{_{ {}_{\scriptstyle #1}}}



\newcounter{vremennyj}

\renewcommand{\eqref}[1]{Equation (\ref{#1})}


\begin{document}
\title[A new Two weight estimates for a vector-valued positive operator]{A new Two weight estimates for a vector-valued positive operator}

\author{Jingguo Lai}

\address{Department of Mathematics \\ Brown University \\ Providence, RI 02912 \\ USA}
\email{jglai@math.brown.edu}




\keywords{two weight, vector-valued positive operator, measurable space setting}

\begin{abstract}
We give a new characterization of the two weight inequality for a vector-valued positive operator. Our characterization has a different flavor than the one of Scurry's \cite{S} and H\"{a}nninen's \cite{TH}. The proof can be essentially derived from the scalar-valued case.
\end{abstract}
\maketitle

\section{Introduction}
\subsection{Preliminaries}
\par We start with the scalar-valued positive dyadic operators. Let $\bm{\alpha} = \{\alpha_{\ci{I}}: I\in\mathcal{D}\}$ be non-negative constants associated to dyadic cubes in $\mathbb{R}^d$. Let $\mu$ and $\nu$ be weights. For a cube $I\in\mathcal{D}$, set

\begin{align}
\mathbb{E}_{\ci{I}}^\mu f := \left(\mu(I)^{-1}\int_Ifd\mu\right)\bm{1}_{\ci{I}}.
\end{align}

Consider the linear operator defined by
\begin{align}
T_{\bm{\alpha}} f := \sum_{I\in\mathcal{D}}\alpha_{\ci{I}}\cdot\mathbb{E}_{\ci{I}}^\mu f. 
\end{align}

\begin{theorem}\label{THM 1}
Let $1<p<\infty$ and let $1/p + 1/p' = 1$. $T_{\bm{\alpha}}: L^p(\mu) \rightarrow L^p(\nu)$ if and only if 
\begin{align}
\int_{J}\left|\sum_{I\in\mathcal{D}:I\subseteq J}\alpha_{\ci{I}}\cdot\bm{1}_{\ci{I}}\right|^pd\nu \leq C_{1}^p\cdot\mu(J), ~J\in\mathcal{D}\\
\int_{J}\left|\sum_{I\in\mathcal{D}:I\subseteq J}\alpha_{\ci{I}}\cdot\frac{\nu(I)}{\mu(I)}\cdot\bm{1}_{\ci{I}}\right|^{p'}d\mu \leq C_2^{p'}\cdot\nu(J), ~J\in\mathcal{D}.
\end{align}
In particular, $||T_{\bm{\alpha}}||_{\ci{L^p(\mu) \rightarrow L^p(\nu)}} \asymp C_1 + C_2$.
\end{theorem}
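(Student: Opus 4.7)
The standard two observations suffice. First, since $\mathbb{E}_{\ci{I}}^\mu\bm{1}_{\ci{J}}=\bm{1}_{\ci{I}}$ for $I\subseteq J$, one has $T_{\bm\alpha}(\bm{1}_{\ci{J}})\geq\sum_{I\subseteq J}\alpha_{\ci{I}}\bm{1}_{\ci{I}}$ on $J$, and testing the $L^p(\mu)\to L^p(\nu)$ bound against $f=\bm{1}_{\ci{J}}$ yields (1.3) with $C_1\leq\|T_{\bm\alpha}\|$. For (1.4), by Fubini
$$\langle T_{\bm\alpha}f,g\rangle_\nu=\sum_I\alpha_{\ci{I}}\langle f\rangle_I^\mu\!\!\int_I g\,d\nu=\Big\langle f,\sum_I\alpha_{\ci{I}}\tfrac{\nu(I)}{\mu(I)}\mathbb{E}_{\ci{I}}^\nu g\Big\rangle_\mu,$$
so the formal adjoint $T_{\bm\alpha}^*$ has the same shape with $(\mu,\nu)$ swapped and coefficients $\tilde\alpha_{\ci{I}}:=\alpha_{\ci{I}}\nu(I)/\mu(I)$; testing $T_{\bm\alpha}^*:L^{p'}(\nu)\to L^{p'}(\mu)$ on $\bm{1}_{\ci{J}}$ produces (1.4) with $C_2\leq\|T_{\bm\alpha}\|$.

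\textbf{Sufficiency.} By duality it is enough to bound $\langle T_{\bm\alpha}f,g\rangle_\nu$ for nonnegative $f,g$ with $\|f\|_{L^p(\mu)}=\|g\|_{L^{p'}(\nu)}=1$. I will build principal cubes $\mathcal{F}$ for $f$ with respect to $\mu$ (a child $F'\subsetneq F$ is added as soon as $\langle f\rangle_{F'}^\mu>2\langle f\rangle_F^\mu$) and $\mathcal{G}$ for $g$ with respect to $\nu$, and write $\pi_{\mathcal{F}}(I),\pi_{\mathcal{G}}(I)$ for the minimal members of these families containing $I$. The classical ingredients are $\langle f\rangle_I^\mu\leq 2\langle f\rangle_{\pi_{\mathcal{F}}(I)}^\mu$, $\langle g\rangle_I^\nu\leq 2\langle g\rangle_{\pi_{\mathcal{G}}(I)}^\nu$, together with the Carleson embeddings $\sum_F(\langle f\rangle_F^\mu)^p\mu(F)\lesssim\|f\|_{L^p(\mu)}^p$ and $\sum_G(\langle g\rangle_G^\nu)^{p'}\nu(G)\lesssim\|g\|_{L^{p'}(\nu)}^{p'}$. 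Substituting yields
$$\langle T_{\bm\alpha}f,g\rangle_\nu\leq 4\!\!\sum_{(F,G)\in\mathcal{F}\times\mathcal{G}}\!\!\langle f\rangle_F^\mu\langle g\rangle_G^\nu\,\Sigma(F,G),\quad\Sigma(F,G):=\!\!\sum_{\substack{I:\pi_{\mathcal{F}}(I)=F,\\ \pi_{\mathcal{G}}(I)=G}}\alpha_{\ci{I}}\nu(I).$$
A pair contributes only when $G\subseteq F$ or $F\subseteq G$. In the case $G\subseteq F$, one forces $F=\pi_{\mathcal{F}}(G)$ and every $I$ in the sum lies in $G$, so that (1.3) together with Hölder give $\Sigma(F,G)\leq C_1\mu(G)^{1/p}\nu(G)^{1/p'}$; the symmetric case $F\subseteq G$ is handled with $C_2$ via (1.4).

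\textbf{Closing the estimate and the main obstacle.} Plugging the $\Sigma$-estimates back reduces the problem to bounding $\sum_{G\in\mathcal{G}}\langle f\rangle_{\pi_{\mathcal{F}}(G)}^\mu\mu(G)^{1/p}\cdot\langle g\rangle_G^\nu\nu(G)^{1/p'}$ (plus the symmetric sum) by $\|f\|_{L^p(\mu)}\|g\|_{L^{p'}(\nu)}$. One Hölder in $G$ reduces this to the two Carleson-type sums $\sum_G(\langle f\rangle_{\pi_{\mathcal{F}}(G)}^\mu)^p\mu(G)$ and $\sum_G(\langle g\rangle_G^\nu)^{p'}\nu(G)$, the second of which is controlled by the Carleson embedding for $\mathcal{G}$. \emph{The hard step} is the first sum: since $\mathcal{G}$ is $\nu$-sparse but need not be $\mu$-sparse, the naive regrouping $\sum_F(\langle f\rangle_F^\mu)^p\sum_{G:\pi_{\mathcal{F}}(G)=F}\mu(G)$ does not reduce to $\sum_F(\langle f\rangle_F^\mu)^p\mu(F)$. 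I plan to resolve this by a refined stopping — either a \emph{parallel} corona family $\mathcal{H}$ obtained by stopping whenever either the $f$- or the $g$-criterion first triggers (so that a single index suffices for both average-replacements and Carleson embeddings can be established with respect to both measures after some bookkeeping), or by a secondary $\mu$-stopping inside each $\mathcal{F}$-region — and then a final Hölder with the two resulting Carleson embeddings closes the argument with the desired $\lesssim(C_1+C_2)\|f\|_{L^p(\mu)}\|g\|_{L^{p'}(\nu)}$.
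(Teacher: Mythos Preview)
The paper does not give its own proof of Theorem~\ref{THM 1}; it is quoted as background, with the proofs attributed to \cite{NTV1}, \cite{LSU}, \cite{T1}, \cite{H}. So there is no in-paper argument to compare your proposal against beyond those citations, and your outline is exactly in the spirit of the short corona proofs of \cite{T1} and \cite{H}: the necessity via testing on $\mathbf{1}_J$ and on the dual side is correct, and the bilinear decomposition over pairs $(F,G)\in\mathcal{F}\times\mathcal{G}$ together with the bound $\Sigma(F,G)\le C_j\,\mu(G)^{1/p}\nu(G)^{1/p'}$ is the standard first move.

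You have correctly isolated the genuine obstacle: after H\"older in $G$, the sum $\sum_{G\in\mathcal{G}}(\langle f\rangle_{\pi_{\mathcal{F}}(G)}^\mu)^p\mu(G)$ cannot be reduced to the $\mathcal{F}$-Carleson embedding because $\mathcal{G}$ carries no $\mu$-sparseness. Be aware, though, that your first proposed fix---merging the two stopping rules into a single parallel family $\mathcal{H}$---does \emph{not} by itself resolve this: the merged family is in general neither $\mu$-sparse nor $\nu$-sparse (the $g$-triggered cubes can have arbitrarily large total $\mu$-measure, and vice versa), so the same obstruction reappears after the final H\"older. The actual completion in \cite{T1} and \cite{H} does use a parallel/two-family corona, but the closing step is not ``two Carleson embeddings after one H\"older''; rather one matches the \emph{choice of testing condition} to the \emph{type of stopping cube} and estimates the localized corona sum against the remaining average so that the surviving sum lands on the Carleson embedding that is actually available. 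Your roadmap is correct and the crux is honestly flagged, but the last paragraph is a plan, not a proof; to finish you will need that extra bookkeeping from \cite{T1} or \cite{H} rather than a plain merge of stopping criteria.
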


\par In \cite{NTV1}, Theorem \ref{THM 1}, named as bilinear embedding theorem, is proved using Bellman function technique for $p=2$. In \cite{LSU}, the case for all $1<p<\infty$ is obtained, using a technique developed in \cite{SW}. In \cite{T1} and \cite{H}, the proof is significantly simplified. In fact, the proof given in \cite{T1} also works for more general measurable spaces, see \emph{Definition} \ref{def1}. In \cite{S}, a vector-velued extension is established, and in \cite{TH}, a simplified proof is obtained using the same idea as \cite{H}.

\subsection{The main problem}
\par We are interested in the two weight estimates for the vector-valued case. Our basic setup is

\begin{definition}\label{def1}
For a measurable space $(\mathcal{X}, \mathcal{T})$, a \emph{lattice} $\mathcal{L}\subseteq\mathcal{T}$ is a collection of measurable subsets of $\mathcal{X}$ with the following properties
\begin{enumerate}
\item $\mathcal{L}$ is a union of \emph{generations} $\mathcal{L}_n, n\in\mathbb{Z}$, where each generation is a collection of disjoint measurable sets, covering $\mathcal{X}$.
\item For each $n\in\mathbb{Z}$, the covering $\mathcal{L}_{n+1}$ is a countable refinement of the covering $\mathcal{L}_n$, i.e. each set $I\in\mathcal{L}_n$ is a countable union of disjoint sets  $J\in\mathcal{L}_{n+1}$. We allow the situation where there is only one such set $J$, i.e. $J=I$; this means that $I\in\mathcal{L}_n$ also belongs to the generation $\mathcal{L}_{n+1}$.
\end{enumerate}
\end{definition}

\begin{definition}\label{def3}
For a positive measure $\mu$ on $(\mathcal{X},\mathcal{T})$ , define the \emph{averaging operator} as
\begin{align} \label{eq1}
\mathbb{E}_{\ci{I}}^\mu f :=\left(\mu(I)^{-1}\int_If d\mu\right)\mathbf{1}_{\ci{I}}.
\end{align}
\end{definition}

From now on, we assume $(\mathcal{X},\mathcal{T})$ is a measurable space, $\mathcal{L}\subseteq\mathcal{T}$ is a lattice on $\mathcal{X}$, and $\mu,\nu$ are two positive measures. 

\begin{definition}\label{def4}
Let $\bm{\alpha} = \{\alpha_{\ci{I}}: I\in\mathcal{L}\}$ be non-negative constants associated to a lattice $\mathcal{L}$ on $(\mathcal{X},\mathcal{T})$. Define a vector-valued operator
\begin{align}
\bm{T}_{\bm{\alpha}}f := \{\alpha_{\ci{I}}\cdot\mathbb{E}_{\ci{I}}^\mu f\}_{\ci{I\in\mathcal{L}}}.
\end{align}
\end{definition}

\begin{theorem}[Two weight estimates for a vector-valued positive operator]\label{thm1}
Let $1<p<\infty$ and $1\leq q<\infty$.
\begin{align}\label{eq4}
\int_{\mathcal{X}}\left[\sum_{I\in\mathcal{L}}\left|\alpha_{\ci{I}}\cdot\mathbb{E}_{\ci{I}}^\mu f\right|^q\right]^{\frac{p}{q}}d\nu \leq C^p\int_{\mathcal{X}}|f|^pd\mu, 
\end{align}
holds if and only if
\begin{enumerate}
\item  for the case $1<p\leq q$, we have
\begin{align}\label{eq5}
\int_{J}\left|\sum_{I\in\mathcal{L}:I\subseteq J}\alpha_{\ci{I}}^q\cdot\bm{1}_{\ci{I}}\right|^{\frac{p}{q}}d\nu \leq C_{1}^p\cdot\mu(J), ~J\in\mathcal{L}
\end{align}
\item for the case $q<p<\infty$, we have both (\ref{eq5}) and
\begin{align}\label{eq6}
\int_{J}\left|\sum_{I\in\mathcal{L}:I\subseteq J}\alpha_{\ci{I}}^q\cdot\frac{\nu(I)}{\mu(I)}\cdot\bm{1}_{\ci{I}}\right|^{\left(\frac{p}{q}\right)'}d\mu \leq C_2^{p'}\cdot\nu(J), ~J\in\mathcal{L}.
\end{align}
\end{enumerate}
In particular, $C \asymp C_1 + C_2$.
\end{theorem}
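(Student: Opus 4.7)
The strategy is to reduce the vector-valued estimate (\ref{eq4}) to the scalar Theorem \ref{THM 1} applied to the positive operator $T_{\bm{\alpha}^q}$ with coefficients $\{\alpha_I^q : I \in \mathcal{L}\}$ at exponent $p_0 := p/q$. The pointwise link is Jensen's inequality: for $q \geq 1$ and $f \geq 0$, $(\mathbb{E}_I^\mu f)^q \leq \mathbb{E}_I^\mu (f^q)$, which yields
\[
\sum_{I\in\mathcal{L}}\bigl|\alpha_I \cdot \mathbb{E}_I^\mu f\bigr|^q \leq T_{\bm{\alpha}^q}(|f|^q).
\]
Upon setting $\beta_I := \alpha_I^q$ and $p_0 := p/q$, the two testing conditions (\ref{eq5}) and (\ref{eq6}) become precisely the Sawyer testing conditions of Theorem \ref{THM 1} for $T_{\bm{\beta}}$ on $L^{p_0}(\mu) \to L^{p_0}(\nu)$.

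For \emph{sufficiency} in Case (ii) ($q < p < \infty$), the exponent $p_0 > 1$ lies in the Banach range, so Theorem \ref{THM 1} applies: (\ref{eq5}) together with (\ref{eq6}) gives $\|T_{\bm{\alpha}^q}\|_{L^{p/q}(\mu)\to L^{p/q}(\nu)} \lesssim C_1 + C_2$. Combined with the Jensen bound above and the substitution $g = |f|^q$ (so $\|g\|_{L^{p/q}(\mu)} = \|f\|_{L^p(\mu)}^q$), this proves (\ref{eq4}) with $C \lesssim (C_1 + C_2)^{1/q}$. In Case (i) ($1 < p \leq q$), the exponent $p_0 = p/q \leq 1$ lies outside the Banach range and Theorem \ref{THM 1} is not directly available; instead I would run a stopping-cube argument on the lattice in the spirit of \cite{H}, using (\ref{eq5}) alone as a Carleson-type condition to telescope the contributions across generations. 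The absence of a second testing condition here is consistent with the fact that $L^{p/q}$ has no useful Banach dual when $p/q \leq 1$.

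For \emph{necessity}, testing (\ref{eq4}) with $f = \mathbf{1}_J$ and using the identity $\mathbb{E}_I^\mu \mathbf{1}_J = \mathbf{1}_I$ for $I \subseteq J$, then restricting the outer integral to $J$ and the inner sum to $\{I \subseteq J\}$, yields (\ref{eq5}) with $C_1 \leq C$. For (\ref{eq6}) in Case (ii), I would dualize (\ref{eq4}) to the adjoint estimate
\[
\Bigl\|\sum_{I \in \mathcal{L}} \alpha_I \frac{\nu(I)}{\mu(I)} \mathbb{E}_I^\nu(g_I)\Bigr\|_{L^{p'}(\mu)} \leq C\,\|\{g_I\}\|_{L^{p'}(\nu;\ell^{q'})},
\]
and test with the vector $g_I = \alpha_I^{q-1}\mathbf{1}_I$ for $I \subseteq J$ (zero otherwise), which makes the adjoint coincide with the sum appearing in (\ref{eq6}). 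This yields an $L^{p'}(\mu)$-bound on that sum; to promote it to the $L^{(p/q)'}(\mu)$-bound required by (\ref{eq6}), one combines with (\ref{eq5}) via Hölder's inequality on $J$ to reconcile the exponents $p'$ and $(p/q)'$.

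The principal obstacle will be the sufficiency of Case (i), which escapes the Banach-range Theorem \ref{THM 1} and must be handled by a direct stopping-time/principal-cube argument on the lattice. A secondary technical difficulty is the necessity of (\ref{eq6}) in Case (ii): the straightforward vector-valued duality produces an $L^{p'}$-bound rather than the desired $L^{(p/q)'}$-bound, and bridging this exponent gap requires a careful interpolation with (\ref{eq5}).
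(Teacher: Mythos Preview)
Your sufficiency argument for Case~(ii) via Jensen and Theorem~\ref{THM 1} at exponent $p/q$ is correct and is exactly what the paper does. For Case~(i) you gesture at a stopping-cube argument; the paper instead uses the level sets $E_k=\{M_\mu f>2^k\}$ of the lattice maximal function together with the subadditivity of $t\mapsto t^{p/q}$ (valid since $p/q\le 1$) to split the outer integral over $k$, and then applies (\ref{eq5}) on the maximal cubes of each $E_k$ and Doob's inequality to resum. Your alternative is plausible but not yet a proof.

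The real gap is the necessity of (\ref{eq6}). Your duality-and-test idea yields only
\[
\Bigl\|\sum_{I\subseteq J}\alpha_I^q\,\frac{\nu(I)}{\mu(I)}\,\mathbf 1_I\Bigr\|_{L^{p'}(\mu)}
\;\le\; C\,\Bigl(\int_J\Bigl(\sum_{I\subseteq J}\alpha_I^q\,\mathbf 1_I\Bigr)^{p'/q'}d\nu\Bigr)^{1/p'},
\]
whereas (\ref{eq6}) demands control of the $L^{(p/q)'}(\mu)$ norm. Since $q>1$ gives $(p/q)'=\frac{p}{p-q}>\frac{p}{p-1}=p'$, you are trying to pass from a \emph{smaller} Lebesgue exponent to a \emph{larger} one, and no amount of H\"older or combination with (\ref{eq5}) (which lives on the $\nu$ side, at exponent $p/q\ne p'/q'$) will do that for a general nonnegative function. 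This step fails.

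The paper closes this gap by proving the \emph{reverse} of your Jensen comparison: via the Rubio de Francia algorithm applied to $M_\mu$ on $L^{p/q}(\mu)$, every $f\ge 0$ is dominated by an $F$ with $\|F\|_{L^p(\mu)}\asymp\|f\|_{L^p(\mu)}$ and $\mathbb E_I^\mu(F^q)\lesssim(\inf_I F)^q\le(\mathbb E_I^\mu F)^q$. This shows
\[
\|\bm T_{\bm\alpha}\|_{L^p(\mu)\to L^p(\ell^q,\nu)}^q \;\asymp\; \|T_{\bm\alpha^q}\|_{L^{p/q}(\mu)\to L^{p/q}(\nu)},
\]
so that (\ref{eq4}) is \emph{equivalent} to boundedness of the scalar operator at exponent $p/q$. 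Condition (\ref{eq6}) is then simply the dual Sawyer testing condition for that scalar problem, and its necessity comes for free from Theorem~\ref{THM 1}. The Rubio de Francia step is the missing idea in your plan.
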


\begin{remark}
The case $q = 1$ is a generalization of Theorem \ref{THM 1} to the measurable space setting. The proof given in \cite{T1} adapts to this general situation.
\end{remark}

\begin{remark}
In \cite{S} and \cite{TH}, to obtain the two testing conditions, they first rewrite (\ref{eq4}) into
\begin{align}
\sum_{I\in\mathcal{L}}\alpha_{\ci{I}}\cdot\mathbb{E}_{\ci{I}}^\mu f\cdot\mathbb{E}_{\ci{I}}^\nu g_{\ci{I}}\cdot\nu(I) \lesssim ||f||_{\ci{L^p(\mu)}}\cdot||\{g_{\ci{I}}\}_{\ci{I\in\mathcal{L}}}||_{\ci{L^{p'}(l^q, \nu)}}.
\end{align}
Setting $f=\textbf{1}_{\ci{J}}$, one deduces (\ref{eq5}). For the second testing condition, one turns to consider the family of functions $\{g_{\ci{I}}\}_{\ci{I\in\mathcal{L}}}$ supported on $J\in\mathcal{L}$ with $l^q$-norm equal to 1. This gives
\begin{align}\label{eq many}
\int_J\left|\sum_{I\in\mathcal{L}:I\subseteq J}\alpha_{\ci{I}}\cdot\frac{\nu(I)}{\mu(I)}\cdot\mathbb{E}_{\ci{I}}^\nu g_{\ci{I}}\right|^{p'}d\mu\lesssim \nu(J), ~J\in\mathcal{L}.
\end{align}
Compare Theorem \ref{thm1} with the main results in \cite{S} and \cite{TH}. We have a very different condition (\ref{eq6}) than (\ref{eq many}) with seemingly \emph{'wrong'} exponents. However, We will see that both (\ref{eq5}) and (\ref{eq6}) are testing conditions on some families of special functions.
\end{remark}

\section{The case: $1<p\leq q$}
\par We will see in this section that when $1<p\leq q$, (\ref{eq5}) is equivalent to (\ref{eq4}). On one hand, (\ref{eq5}) can be deduced from (\ref{eq4}) by setting $f=\textbf{1}_{\ci{J}}$. On the other hand,  consider the maximal function
\begin{align}\label{eq M}
M_\mu f(x) := \sup_{x\in I, I\in\mathcal{L}}\left|\mathbb{E}_{I}^\mu f(x)\right|.
\end{align}
The celebrated \emph{Doob's martingale inequality} asserts
\begin{align}\label{eq Doob}
||M_\mu f||_{\ci{L^p(\mu)}} \leq p'\cdot||f||_{\ci{L^p(\mu)}}.
\end{align}
Let $E_k := \{x\in\mathcal{X}:M_\mu f(x) > 2^k\}$ and let $\mathcal{E}_k := \{I\in\mathcal{L}:I\in E_k\}$. Note that $E_k$ is a disjoint union of maximal sets in $\mathcal{E}_k$, maximal in the sense of inclusion. Denote these disjoint maximal sets by $\mathcal{E}^*_k$. Hence, $E_k = \sqcup_{\ci{J\in\mathcal{E}^*_k}} J$.

\begin{align*}
\int_{\mathcal{X}}\left[\sum_{I\in\mathcal{L}}\left|\alpha_{\ci{I}}\cdot\mathbb{E}_{\ci{I}}^\mu f\right|^q\right]^{\frac{p}{q}}d\nu
& \leq \sum_k\int_{E_{k}}\left[\sum_{I\in\mathcal{E}_k\setminus\mathcal{E}_{k+1}}\left|\alpha_{\ci{I}}\cdot\mathbb{E}_{\ci{I}}^\mu f\right|^q\right]^{\frac{p}{q}}d\nu, ~1<p\leq q\\
& \leq \sum_k2^{(k+1)p}\int_{E_k}\left[\sum_{I\in\mathcal{E}_k\setminus\mathcal{E}_{k+1}}\alpha_{\ci{I}}^q\cdot\bm{1}_{\ci{I}}\right]^{\frac{p}{q}}d\nu\\
& \leq \sum_k2^{(k+1)p}\sum_{J\in\mathcal{E}^*_k}\int_{J}\left[\sum_{I\in\mathcal{L}:I\subseteq J}\alpha_{\ci{I}}^q\cdot\bm{1}_{\ci{I}}\right]^{\frac{p}{q}}d\nu\\
& \leq C_1^p\cdot\sum_k2^{(k+1)p}\cdot\mu(E_k), ~(\ref{eq5})\\
& \lesssim C_1^p\cdot ||M_\mu f||_{\ci{L^p(\mu)}}^p\\
& \leq C_1^p\cdot(p')^p\cdot||f||_{\ci{L^p(\mu)}}^p, ~(\ref{eq Doob}).
\end{align*}

\section{The case: $q<p<\infty$, a counterexample}
\par In this section, we see that (\ref{eq5}) itself is not sufficient for (\ref{eq4}) for the case $q<p<\infty$.

\par Consider the real line $\mathbb{R}$ with the Borel $\sigma$-algebra $\mathcal{B}(\mathbb{R})$. Let the lattice be all the tri-adic intervals. We specify the positive measures $\mu, \nu$, the non-negative constants $\bm{\alpha} = \{\alpha_{\ci{I}}: I\in\mathcal{L}\}$, and the functions $f$ in the following way. 

\par Let $C=\cap_{n \geq 0}C_n$ be the $1/3$-Cantor set, where $C_0=[0,1), C_1=[0, 1/3)\cup[2/3, 1)$ and, in general, $C_n=\cup\left\{[x, x+3^{-n}): x=\sum_{j=1}^n\varepsilon_j3^{-j}, \varepsilon_j\in\{0, 2\}\right\}$. 

\begin{enumerate}
\item The measure $\mu$ is the Lebsgue measure restricted on $[0, 1)$ and the measure $\nu$ is the Cantor measure, i.e. $\nu(I)=2^{-n}$ for each $I$ belongs to a connect component of $C_n$.

\item Define $\alpha_{\ci{I}} = (2/3)^{n/p}$ for each $I$ belongs to a connect component of $C_n$.

\item For the function $f$, consider the gap of $C$, i.e. $[0, 1)\setminus C$. This is a disjoint union of tri-adic intervals. Let $f=(3/2)^{n/p}\cdot n^{-r}$ for each $I\in [0, 1)\setminus C$ with length of $I$ equals $3^{-n}$, where $r$ is to be chosen later in the proof.
\end{enumerate}

\begin{claim}
The above construction gives a counterexample with properly chosen $r$.
\end{claim}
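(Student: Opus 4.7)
The plan is to choose $r\in(1/p,\,1/q]$, which is nonempty since $q<p$ implies $1/q>1/p$, and to show that with this choice $\|f\|_{L^p(\mu)}$ is finite while the left hand side of \eqref{eq4} diverges. I will also verify \eqref{eq5} to confirm that the failure is not due to a breakdown of the testing condition.

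First I would compute $\|f\|_{L^p(\mu)}^p$ directly from the decomposition of the gap: for each $n\geq 1$ there are $2^{n-1}$ tri-adic intervals of length $3^{-n}$ in $C_{n-1}\setminus C_n$, on each of which $|f|^p=(3/2)^n n^{-rp}$. Summing yields $\|f\|_{L^p(\mu)}^p=\frac12\sum_{n\geq 1}n^{-rp}<\infty$ because $rp>1$.

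The heart of the argument is the computation of $\alpha_I\cdot\mathbb{E}_{\ci{I}}^\mu f$ when $I$ is a connected component of $C_n$. Within such $I$, at level $n+k$ the gap contributes $2^{k-1}$ intervals of length $3^{-(n+k)}$ on which $f=(3/2)^{(n+k)/p}(n+k)^{-r}$. Summing and multiplying by $\mu(I)^{-1}=3^n$ and by $\alpha_I=(2/3)^{n/p}$ gives, after the telescoping of the $(3/2)^{n/p}(2/3)^{n/p}=1$ factor,
\begin{align*}
\alpha_I\cdot\mathbb{E}_{\ci{I}}^\mu f \;=\; b_n \;:=\; \tfrac12\sum_{k\geq 1} a^k (n+k)^{-r}, \qquad a:=(2/3)(3/2)^{1/p}\in(2/3,1).
\end{align*}
Crucially, $b_n$ depends only on the generation $n$, not on the point $x$ or the particular component $I$. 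Since $a<1$, an elementary estimate shows $b_n\asymp n^{-r}$ for $n\geq 1$. Now $\nu$ is supported on $C$, and for $x\in C$ the intervals containing $x$ with $\alpha_I\neq 0$ are exactly the components $I_n(x)\in C_n$, $n\geq 0$. Hence
\begin{align*}
\int_{\mathcal{X}}\Bigl[\sum_{I\in\mathcal{L}}|\alpha_{\ci{I}}\cdot\mathbb{E}_{\ci{I}}^\mu f|^q\Bigr]^{p/q}d\nu \;=\; \Bigl(\sum_{n\geq 0} b_n^q\Bigr)^{p/q}\nu(C)\;\asymp\;\Bigl(\sum_{n\geq 1} n^{-rq}\Bigr)^{p/q},
\end{align*}
which is $+\infty$ because $rq\leq 1$. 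Therefore \eqref{eq4} fails.

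To verify \eqref{eq5}: if $J\in\mathcal{L}$ is not a connected component of any $C_n$ then $J$ lies inside a gap, and every subinterval $I\subseteq J$ satisfies $\alpha_I=0$, so the condition is trivial. If $J$ is a component of $C_n$, the same ray argument gives, $\nu$-a.e.\ on $J$, $\sum_{k\geq 0}(2/3)^{(n+k)q/p}=(2/3)^{nq/p}/(1-(2/3)^{q/p})$; raising to the $p/q$ power and using $(2/3)^n\nu(J)=(2/3)^n 2^{-n}=3^{-n}=\mu(J)$ yields \eqref{eq5} with $C_1^p=(1-(2/3)^{q/p})^{-p/q}$. The main obstacle is just the bookkeeping in the key calculation of $\alpha_I\cdot\mathbb{E}_{\ci{I}}^\mu f$; everything else follows from harmonic-series style convergence arguments, and the whole counterexample works because the calibration of $\alpha_I$ against $\mu(I)$ makes the integrand in \eqref{eq4} constant on $C$, concentrating the $\nu$-mass on the level where $\ell^q$ fails to sum.
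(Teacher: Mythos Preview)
Your proof is correct and follows essentially the same strategy as the paper: verify \eqref{eq5} via a geometric series on components of $C_n$, check $\|f\|_{L^p(\mu)}<\infty$ from $rp>1$, and show the $\ell^q$-sum over levels diverges on $C$ because $rq\leq 1$. The only difference is cosmetic: you compute $\alpha_I\cdot\mathbb{E}_I^\mu f=b_n\asymp n^{-r}$ exactly and observe the integrand is constant on $C$, whereas the paper keeps only the first gap term as a lower bound and passes through a truncation $\mathcal{I}_n$; your version is slightly cleaner but not a different route.
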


\begin{proof}
We begin with checking (\ref{eq5}). It suffices to check for every $J$ belongs to a connected component of $C_n$, and thus $\mu(J)=3^{-n}$. Note that

\begin{align*}
\left|\sum_{I\in\mathcal{L}:I\subseteq J}\alpha_{\ci{I}}^q\cdot\bm{1}_{\ci{I}}\right|^{\frac{p}{q}} \leq \left|\sum_{k\geq n}\left(\frac{2}{3}\right)^{\frac{qk}{p}}\right|^{\frac{p}{q}}\asymp\left(\frac{2}{3}\right)^{n}.
\end{align*}

\par Hence,
\begin{align*}
\int_{J}\left|\sum_{I\in\mathcal{L}:I\subseteq J}\alpha_{\ci{I}}^q\cdot\bm{1}_{\ci{I}}\right|^{\frac{p}{q}}d\nu \lesssim \left(\frac{2}{3}\right)^{n}\cdot\nu(J)=\mu(J).
\end{align*}

Next, we show that (\ref{eq4}) fails. This requires a smart choice of $r$ in the definition of $f$. Picking $r > \frac{1}{p}$, we have

\begin{align*}
||f||_{\ci{L^p(\mu)}}^p= \int_0^1|f|^pdx
=\sum_{n\geq1}\left(\frac{3}{2}\right)^nn^{-pr}\cdot\frac{1}{3^n}\cdot2^n
=\sum_{n\geq1}n^{-pr}<\infty.
\end{align*}

Since $q<p<\infty$, we can pick $r$ such that $\frac{1}{p}<r<\frac{1}{q}$. Note that for every $I$ belongs to a connected component of $C_n$, we have

\begin{align*}
\mathbb{E}_{\ci{I}}^\mu f \geq \frac{1}{3}\left(\frac{3}{2}\right)^{\frac{n+1}{p}}(n+1)^{-r}.
\end{align*}

Hence, consider $\mathcal{I}_n=\left\{I:I ~\textup{is tri-adic with length less than or equal to}~ 3^{-n}\right\}$, 

\begin{align*}
\sum_{I\in\mathcal{I}_n}\left|\alpha_{\ci{I}}\cdot\mathbb{E}_{\ci{I}}^\mu f\right|^q\cdot\textbf{1}_{\ci{C_n}}
\geq \sum_{k\leq n}\left|\frac{1}{3}\left(\frac{3}{2}\right)^{\frac{1}{p}}(k+1)^{-r}\right|^q
\gtrsim \sum_{k\leq n}(k+1)^{-qr}.
\end{align*}

And so,
\begin{align*}
\int\left[\sum_{I\in\mathcal{I}_n}\left|\alpha_{\ci{I}}\cdot\mathbb{E}_{\ci{I}}^\mu f\right|^q\right]^{\frac{p}{q}}d\nu
\gtrsim \left[\sum_{k\leq n}(k+1)^{-qr}\right]^{\frac{p}{q}}\cdot\nu(C_n)\rightarrow\infty~\textup{as}~ n\rightarrow\infty.
\end{align*}
We can see that the condition $q<p<\infty$ is crutial in our construction.
\end{proof}

\section{The case: $q<p<\infty$}
\par We discuss the case $q<p<\infty$ of Theorem \ref{thm1} in this section. In particular, we see that both (\ref{eq5}) and (\ref{eq6}) are testing conditions on some families of special functions. 
\par To begin, since

\begin{align}
||\bm{T}_{\bm{\alpha}}f||_{\ci{L^p(l^q, \nu)}}^q =\sup_{||g||_{\ci{L^{\left(p/q\right)'}(\nu)}} = 1} \int_{\mathcal{X}}\left[\sum_{I\in\mathcal{L}}\left|\alpha_{\ci{I}}\cdot\mathbb{E}_{\ci{I}}^\mu f\right|^q\right]gd\nu,
\end{align}

we can write

\begin{align}\label{eq last}
||\bm{T}_{\bm{\alpha}}||_{\ci{L^p(\mu)\rightarrow L^p(l^q, \nu)}}^q =\sup_{||f||_{\ci{L^p(\mu)}=1}} ~\sup_{||g||_{\ci{L^{\left(p/q\right)'}(\nu)}} = 1} \int_{\mathcal{X}}\left[\sum_{I\in\mathcal{L}}\left|\alpha_{\ci{I}}\cdot\mathbb{E}_{\ci{I}}^\mu f\right|^q\right]gd\nu. 
\end{align}

Without loss of generality, we assume that both $f$ and $g$ are non-negative. The following lemma reduces us to the scalar-valued case.

\begin{lemma}
\begin{align}\label{eq essential}
||\bm{T}_{\bm{\alpha}}||_{\ci{L^p(\mu)\rightarrow L^p(l^q, \nu)}}^q \asymp \sup_{||f||_{\ci{L^p(\mu)}=1}}~  \sup_{||g||_{\ci{L^{\left(p/q\right)'}(\nu)} }= 1} \int_{\mathcal{X}}\left[\sum_{I\in\mathcal{L}}\alpha_{\ci{I}}^q\cdot\mathbb{E}_{\ci{I}}^\mu(f^q)\right]gd\nu. 
\end{align}
\end{lemma}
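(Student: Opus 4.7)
My plan is to prove (\ref{eq essential}) by establishing the two directions separately. For $\text{LHS}\leq\text{RHS}$, Jensen's inequality gives $|\mathbb{E}^\mu_I f|^q=(\mathbb{E}^\mu_I f)^q\leq \mathbb{E}^\mu_I(f^q)$ pointwise on each $I\in\mathcal{L}$ (since $q\geq 1$ and we may assume $f\geq 0$). Inserting this into the bilinear form in (\ref{eq last}) yields the inequality with constant one.

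For the reverse direction $\text{RHS}\lesssim\text{LHS}$, I will find, for each non-negative $f\in L^p(\mu)$, an auxiliary non-negative $\tilde f$ satisfying (i) $||\tilde f||_{L^p(\mu)}\lesssim ||f||_{L^p(\mu)}$, and (ii) $\mathbb{E}^\mu_I(f^q)\leq (\mathbb{E}^\mu_I\tilde f)^q$ for every $I\in\mathcal{L}$. Given such $\tilde f$, since $g,\tilde f\geq 0$, property (ii) together with the definition of the vector-valued operator norm gives
\begin{align*}
\int_\mathcal{X}\sum_I\alpha_I^q\mathbb{E}^\mu_I(f^q)\,g\,d\nu
&\leq \int_\mathcal{X}\sum_I\alpha_I^q(\mathbb{E}^\mu_I\tilde f)^q g\,d\nu\\
&\leq ||\bm{T}_{\bm{\alpha}}||^q_{L^p(\mu)\to L^p(l^q,\nu)}\cdot ||\tilde f||^q_{L^p(\mu)}\cdot ||g||_{L^{(p/q)'}(\nu)},
\end{align*}
and (i) then converts this into the desired upper bound in terms of $||f||_{L^p(\mu)}^q\cdot ||g||_{L^{(p/q)'}(\nu)}$.

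The concrete choice is $\tilde f:=\big(M_\mu(f^q)\big)^{1/q}$, with $M_\mu$ the dyadic maximal operator of (\ref{eq M}). Property (ii) is automatic from the definition of $M_\mu$: on $I$, $M_\mu(f^q)\geq \mathbb{E}^\mu_I(f^q)$, so $\tilde f\geq (\mathbb{E}^\mu_I(f^q))^{1/q}$ pointwise on $I$; averaging and raising to the $q$-th power yields (ii). Property (i) follows from Doob's inequality (\ref{eq Doob}) applied at exponent $p/q$, which is legitimate precisely because $q<p$:
\begin{align*}
||\tilde f||_{L^p(\mu)}^p=||M_\mu(f^q)||_{L^{p/q}(\mu)}^{p/q}\leq \big((p/q)'\big)^{p/q}||f^q||_{L^{p/q}(\mu)}^{p/q}=\big((p/q)'\big)^{p/q}||f||_{L^p(\mu)}^p.
\end{align*}

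The only creative step is identifying $\tilde f$: the naive guess $\tilde f=f$ is useless (Jensen is tight in the wrong direction) and $\tilde f=M_\mu f$ does not compensate for the $q$-th power. Taking the $q$-th root of the maximal function of $f^q$ precisely balances these two issues, and the hypothesis $q<p$ enters exactly where expected, through the $L^{p/q}$-boundedness of Doob's maximal operator.
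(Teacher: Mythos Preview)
Your proof is correct and in fact takes a more direct route than the paper's. For the easy direction both arguments coincide (your Jensen step is exactly the paper's ``easy application of H\"older''). For the reverse inequality, however, the paper invokes the full Rubio de Francia algorithm: it builds
\[
F=\Bigl[\sum_{k\geq 0}\bigl(2\|M_\mu\|_{L^{p/q}\to L^{p/q}}\bigr)^{-k}M_\mu^{(k)}(f^q)\Bigr]^{1/q},
\]
which satisfies $f\leq F$, $\|F\|_{L^p(\mu)}\asymp\|f\|_{L^p(\mu)}$, and the $A_1$-type bound $\mathbb{E}_I^\mu(F^q)\lesssim\inf_I F^q$; from these one deduces $\mathbb{E}_I^\mu(f^q)\leq\mathbb{E}_I^\mu(F^q)\lesssim(\mathbb{E}_I^\mu F)^q$. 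You observe that the iteration is unnecessary: the single-step choice $\tilde f=(M_\mu(f^q))^{1/q}$ already gives $\mathbb{E}_I^\mu(f^q)\leq(\mathbb{E}_I^\mu\tilde f)^q$ straight from the definition of the maximal function, and Doob's inequality at exponent $p/q>1$ handles the norm control. This is genuinely simpler---the Rubio de Francia machinery produces a stronger $A_1$ weight $F^q$, but that extra structure is never used here. The paper's route is more familiar to readers who know the algorithm as a black box; your route exposes exactly what is needed and nothing more.
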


\par An easy application of H\"{o}lder's inequality shows that the LHS of (\ref{eq essential}) is no more than its RHS. The other half of this lemma depends on the following famous \emph{Rubio de Francia Algorithm}.

\begin{lemma}[Rubio de Francia Algorithm] \label{L1}
For every $q<p<\infty$ and $f\in L^p(\mu)$, there exists a function $F\in L^p(\mu)$, such that $f\leq F$, $||F||_{\ci{L^p(\mu)}}\asymp||f||_{\ci{L^p(\mu)}}$ and 
$$\mu(I)^{-1}\int_IF^qd\mu\lesssim\inf_{\ci{x\in I}}~F^q(x),  ~~~I\in\mathcal{L}.$$

\end{lemma}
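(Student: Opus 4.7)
The plan is to carry out the classical Rubio de Francia iteration, exploiting the hypothesis $q<p$ by working with $f^q$ in the space $L^{p/q}(\mu)$, where $p/q>1$ so the maximal operator $M_\mu$ defined in \eqref{eq M} is bounded by Doob's inequality \eqref{eq Doob}. Writing $A := \|M_\mu\|_{L^{p/q}(\mu)\to L^{p/q}(\mu)}$ and $h := f^q$, I would introduce the iterate
\begin{align*}
H := \sum_{k=0}^\infty \frac{M_\mu^k h}{(2A)^k},
\end{align*}
with the convention $M_\mu^0 h := h$, and then take $F := H^{1/q}$.

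Three properties of $H$ need to be verified. First, the geometric decay of the series constants together with $\|M_\mu^k h\|_{L^{p/q}(\mu)}\le A^k \|h\|_{L^{p/q}(\mu)}$ gives $\|H\|_{L^{p/q}(\mu)}\le 2\|h\|_{L^{p/q}(\mu)}$, so $H$ is finite $\mu$-a.e. Second, keeping only the $k=0$ term yields $H\ge h$ pointwise. Third, using sublinearity of $M_\mu$ on countable sums of non-negative functions and re-indexing, one obtains the key self-improving bound
\begin{align*}
M_\mu H \le \sum_{k=0}^\infty \frac{M_\mu^{k+1} h}{(2A)^k} = 2A\sum_{k=0}^\infty \frac{M_\mu^{k+1} h}{(2A)^{k+1}} \le 2A\cdot H.
\end{align*}

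With these in hand, $F=H^{1/q}$ verifies all three conclusions of the lemma. Clearly $F\ge f$, and $\|F\|_{L^p(\mu)}^p = \|H\|_{L^{p/q}(\mu)}^{p/q}\lesssim \|f\|_{L^p(\mu)}^p$. For the averaging estimate, given $I\in\mathcal{L}$, for every $x\in I$ the definition of $M_\mu$ yields $\mu(I)^{-1}\int_I F^q\,d\mu = \mathbb{E}_I^\mu H\le M_\mu H(x)\le 2A\cdot F^q(x)$, and taking the infimum over $x\in I$ delivers the desired inequality. The only genuine input is the $L^{p/q}$ boundedness of $M_\mu$, which is precisely where the hypothesis $q<p$ enters; the rest is standard bookkeeping and I would not expect any real obstacle.
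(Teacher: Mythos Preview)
Your argument is correct and is essentially the same as the paper's: both define $F^q$ as the Rubio de Francia iterate $\sum_{k\ge 0}(2A)^{-k}M_\mu^{(k)}(f^q)$ with $A=\|M_\mu\|_{L^{p/q}(\mu)\to L^{p/q}(\mu)}$, and then verify $f\le F$, $\|F\|_{L^p(\mu)}\asymp\|f\|_{L^p(\mu)}$, and $M_\mu(F^q)\le 2A\,F^q$ in the same way. Your use of the sublinearity inequality $M_\mu H\le\sum_k (2A)^{-k}M_\mu^{k+1}h$ is in fact slightly more careful than the paper's corresponding step, which writes this as an equality.
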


\begin{proof}
\par Consider the maximal operator $M_\mu$ defined in (\ref{eq M}). Doob's martingale inequality (\ref{eq Doob}) implies
 
\begin{align}
||M_\mu||_{\ci{L^{p/q}(\mu)\rightarrow L^{p/q}(\mu)}} \leq \left(\frac{p}{q}\right)'.
\end{align} 

Denote $M_\mu^{(0)}=Id$, $M_\mu^{(1)}=M_\mu$ and $M_\mu^{(k)}=M_\mu\circ M_\mu^{(k-1)}$. Define the function $F$ by

\begin{align}
F=\left[\sum_{k\geq0}\left(2||M_\mu||_{\ci{L^{p/q}(\mu)\rightarrow L^{p/q}(\mu)}}\right)^{-k}M_\mu^{(k)}\left(f^q\right)\right]^{\frac{1}{q}}.
\end{align}

\par First we check the validity of the definition for $F$. Note that

\begin{align*}
||F||_{\ci{L^p(\mu)}}^q
& = \left\{\int_{\mathcal{X}} \left[\sum_{k\geq0}\left(2||M_\mu||_{\ci{L^{p/q}(\mu)\rightarrow L^{p/q}(\mu)}}\right)^{-k}M_\mu^{(k)}\left(f^q\right)\right]^{\frac{p}{q}}d\mu\right\}^{\frac{q}{p}}\\
& \leq \sum_{k\geq0}\left(2||M_\mu||_{\ci{L^{p/q}(\mu)\rightarrow L^{p/q}(\mu)}}\right)^{-k}\left(\int_{\mathcal{X}} \left|M_\mu^{(k)}\left(f^q\right)\right|^{\frac{p}{q}}d\mu\right)^{\frac{q}{p}},~~~~~\textup{Minkowski inequality}\\
& \leq \sum_{k\geq0}\left(2||M_\mu||_{\ci{L^{p/q}(\mu)\rightarrow L^{p/q}(\mu)}}\right)^{-k}\left(||M_\mu||_{\ci{L^{p/q}(\mu)\rightarrow L^{p/q}(\mu)}}\right)^{k}||f||_{\ci{L^p(\mu)}}^q=2||f||_{\ci{L^p(\mu)}}^q.
\end{align*}

\par Hence, $F$ is the $L^{p/q}(\mu)$-limit of the partial sums and thus well-defined. Moreover, we have also proved that $||F||_{\ci{L^p(\mu)}}\lesssim||f||_{\ci{L^p(\mu)}}$.

\par Considering only $k=0$ in the definition for $F$, we have $F\geq f$. And so $||F||_{\ci{L^p(\mu)}}\asymp||f||_{\ci{L^p(\mu)}}$. Finally, note that 

\begin{align}
\mu(I)^{-1}\int_IF^qd\mu \leq \inf_{x\in I} ~M_\mu(F^q)(x)
\end{align}
and 
$$M_\mu(F^q)=\sum_{k\geq0}\left(2||M_\mu||_{\ci{L^{p/q}(\mu)\rightarrow L^{p/q}(\mu)}}\right)^{-k}M_\mu^{(k+1)}\left(f^q\right)=2||M_\mu||_{\ci{L^{p/q}(\mu)\rightarrow L^{p/q}(\mu)}}\left(F^q-f^q\right)\lesssim F^q.$$ 
Therefore, we deduce
$$\mu(I)^{-1}\int_IF^qd\mu\lesssim\inf_{\ci{x\in I}}~F^q(x),  ~~~I\in\mathcal{L}.$$
\end{proof}

Applying Rubio de Francia Algorithm, we obtain

\begin{align*}
\int_{\mathcal{X}}\left[\sum_{I\in\mathcal{L}}\alpha_{\ci{I}}^q\cdot\mathbb{E}_{\ci{I}}^\mu(f^q)\right]gd\nu
& \leq \int_{\mathcal{X}}\left[\sum_{I\in\mathcal{L}}\alpha_{\ci{I}}^q\cdot\mathbb{E}_{\ci{I}}^\mu(F^q)\right]gd\nu\\
& \lesssim \int_{\mathcal{X}}\left[\sum_{I\in\mathcal{L}}\alpha_{\ci{I}}^q\cdot\left(\mathbb{E}_{\ci{I}}^\mu(F)\right)^q\right]gd\nu\\
&\leq ||\bm{T}_{\bm{\alpha}}||_{\ci{L^p(\mu)\rightarrow L^p(l^q, \nu)}}^q\cdot||F||_{\ci{L^p(\mu)}}\cdot ||g||_{\ci{L^{\left(p/q\right)'}(\nu)}},~ (\ref{eq last})\\
& \lesssim ||\bm{T}_{\bm{\alpha}}||_{\ci{L^p(\mu)\rightarrow L^p(l^q, \nu)}}^q, ~\left(||f||_{\ci{L^p(\mu)}}=||g||_{\ci{L^{\left(p/q\right)'}(\nu)}}=1\right).
\end{align*}

Now that our problem is reduced to determine a necessary and sufficient condition of
\begin{align}
\int_{\mathcal{X}}\left|\sum_{I\in\mathcal{L}}\alpha_{\ci{I}}^q\cdot\mathbb{E}_{\ci{I}}^\mu(f)\right|^{\frac{p}{q}}d\nu \lesssim \int_{\mathcal{X}}|f|^{\frac{p}{q}}d\mu,
\end{align}
we may consult to the scalar-valued Theorem \ref{THM 1}. Note that Theorem \ref{THM 1} still holds in the measurable space setting as is pointed out in \cite{T1}. Therefore, Theorem \ref{thm1} follows from Theorem \ref{THM 1} for free, and both (\ref{eq5}) and (\ref{eq6}) are testing conditions with respect to the derived scalar-valued problem. 

\section*{acknowledgement}
\par The author would like to thank his PhD thesis advisor, Serguei Treil, for many enlightening and insightful discussions on this problem.

\end{document}